\documentclass[11pt]{article}
\textheight 205 true mm \textwidth 160 true mm \oddsidemargin
3true mm \evensidemargin 3 true mm

\setlength\topmargin{0pt}
\addtolength\topmargin{-\headheight}
\addtolength\topmargin{-\headsep}
\setlength\oddsidemargin{0pt}
\setlength\textwidth{\paperwidth}
\addtolength\textwidth{-2.5in}
\setlength\textheight{\paperheight}
\addtolength\textheight{-2in}
\usepackage{layout}
\usepackage[makeroom]{cancel}
\usepackage{bbm}

\usepackage{amsfonts,dsfont}
\usepackage{amsmath}
\usepackage{array}
\usepackage{amssymb,bm}
\usepackage{amsthm}
\usepackage{graphicx} 
\usepackage{mathtools}
\usepackage{multicol}
\usepackage{mathrsfs}
\usepackage{enumerate,enumitem}
\usepackage[rgb,dvipsnames]{xcolor}
\usepackage{float}
\usepackage[normalem]{ulem}
\usepackage{circuitikz}
\usepackage{cite}

\usepackage{float}

%
%

\newtheorem{theorem}{Theorem}[section]

\newtheorem{definition}[theorem]{Definition}

\newtheorem{remark}[theorem]{Remark}
\newtheorem{example}[theorem]{Example}
\newtheorem{examples}[theorem]{Examples}
\newtheorem{foo}[theorem]{Remarks}

\newtheorem{open question}[theorem]{Open Question}
\newtheorem{c/p}[theorem]{Conjecture/Proposition}









\def\vint{\mathop{\mathchoice%
 {\setbox0\hbox{$\displaystyle\intop$}\kern 0.22\wd0%
 \vcenter{\hrule width 0.6\wd0}\kern -0.82\wd0}%
 {\setbox0\hbox{$\textstyle\intop$}\kern 0.2\wd0%
 \vcenter{\hrule width 0.6\wd0}\kern -0.8\wd0}%
 {\setbox0\hbox{$\scriptstyle\intop$}\kern 0.2\wd0%
 \vcenter{\hrule width 0.6\wd0}\kern -0.8\wd0}%
 {\setbox0\hbox{$\scriptscriptstyle\intop$}\kern 0.2\wd0%
 \vcenter{\hrule width 0.6\wd0}\kern -0.8\wd0}}%
 \mathopen{}\int}

\newcommand{\M}{\mathbb M}


\DeclareMathOperator{\Cut}{Cut}


\usepackage[textwidth=3.2cm]{todonotes}

\title{A note on first eigenvalue estimates by coupling methods in K\"ahler and quaternion K\"ahler manifolds}

\author{ Fabrice Baudoin\footnote{F.B. was partly supported by the NSF grant DMS~1901315.}, Guang Yang\footnote{G.Y. was partly supported by the NSF grant DMS~1901315.}, Gunhee Cho}

\date{\today}

\begin{document}

\maketitle

\begin{abstract}
In this short note, using the Kendall-Cranston coupling, we study on K\"ahler (resp. quaternion K\"ahler) manifolds first eigenvalue estimates in terms of dimension, diameter, and lower bounds on the holomorphic (resp. quaternionic) sectional curvature.
\end{abstract}

\tableofcontents

\section{Introduction}

Using probabilistic coupling arguments M.F. Chen and F.Y. Wang \cite{MR1308707,MR1450586} proved the following remarkable theorem on Riemannian manifolds:

\begin{theorem}[Chen-Wang, \cite{MR1308707,MR1450586}]\label{theo_intro}
Let $(\M,g)$ be a compact Riemannian manifold with dimension $n$, diameter $D$ and first eigenvalue $\lambda_1$. Assume that $\mathrm{Ric} \ge (n-1)k$ with $k\in \mathbb{R}$. Then, for any $C^2$ function $g:[0,D) \to \mathbb R$ such that $g(0)=0$ and $g'>0$ on $[0,D)$ one has
\[
\lambda_1 \ge - \sup_{r \in (0,D)} \frac{\mathcal{A}_{k} g(r)}{g(r)},
\]
where
\[
\mathcal{A}_{k}=4 \frac{\partial^2}{\partial r^2}+ (n-1)G(k,r)\frac{\partial}{\partial r},
\]
and $G(k,r)$ is the comparison function \eqref{comparison}.

\end{theorem}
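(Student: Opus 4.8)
The plan is to reproduce the coupling argument of Chen and Wang \cite{MR1308707,MR1450586}. Let $u$ be an eigenfunction for $\lambda_1$, so that $\Delta u=-\lambda_1 u$; let $(P_t)_{t\ge 0}=(e^{t\Delta})_{t\ge 0}$ be the heat semigroup and $X_t$ the Markov process generated by $\Delta$, so that $P_tu=e^{-\lambda_1 t}u$ and $P_tu(x)=\E{u(X_t)}$ when $X_0=x$. First I would set
\[
\xi:=-\sup_{r\in(0,D)}\frac{\mathcal{A}_k g(r)}{g(r)},
\]
and observe that, since $g(0)=0$ and $g'>0$ on $[0,D)$, one has $g>0$ on $(0,D)$ and hence $\mathcal{A}_k g(r)+\xi\, g(r)\le 0$ for all $r\in(0,D)$. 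It then suffices to prove $\lambda_1\ge\xi$.

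The core of the proof is the Kendall--Cranston coupling. For $x\ne y$ let $(X_t,Y_t)$ be the coupling by reflection of two $\Delta$-diffusions started from $(x,y)$, run up to the coupling time $T=\inf\{t\ge 0:X_t=Y_t\}$, and set $\rho_t:=d(X_t,Y_t)$. Off the cut locus the minimal geodesic from $X_t$ to $Y_t$ depends smoothly on its endpoints, so Itô's formula applies to $\rho_t$: because the reflection coupling doubles the radial component of a $\Delta$-Brownian motion, the martingale part of $\rho_t$ has quadratic variation $8\,dt$, while the drift of $\rho_t$ is a second-variation-of-arclength term along the coupled geodesic which, by the index comparison theorem under $\mathrm{Ric}\ge(n-1)k$ applied to a suitable comparison field, is bounded by $(n-1)G(k,\rho_t)\,dt$; the contribution of the cut locus, handled either by smooth approximation of $d(\cdot,\cdot)$ or by recognizing it as a decreasing process, is non-positive. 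Since $g'>0$, Itô's formula then gives, for $t<T$,
\[
d\,g(\rho_t)\ \le\ -2\sqrt{2}\,g'(\rho_t)\,dW_t+\bigl(4g''(\rho_t)+(n-1)G(k,\rho_t)\,g'(\rho_t)\bigr)\,dt\ =\ -2\sqrt{2}\,g'(\rho_t)\,dW_t+\mathcal{A}_k g(\rho_t)\,dt,
\]
where $W$ is a standard one-dimensional Brownian motion and the inequality already absorbs the cut-locus term; for $t\ge T$ both sides may be taken to be $0$. I expect \emph{this step} to be the main obstacle: one must make rigorous sense of the radial process across the cut locus of $\M$, where $d(\cdot,\cdot)$ is merely Lipschitz, and verify carefully that reflection coupling of $\Delta$-diffusions produces exactly the quadratic variation $8\,dt$ and a drift dominated by $(n-1)G(k,\rho_t)$ — this is precisely the technical heart of \cite{MR1308707,MR1450586}.

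Granting the displayed inequality, the rest is short. Combined with $\mathcal{A}_k g+\xi g\le 0$ on $(0,D)$, it shows that $t\mapsto e^{\xi t}g(\rho_t)$ is a supermartingale (constantly $0$ for $t\ge T$), whence $\E{g(\rho_t)}\le e^{-\xi t}\,g(\rho_0)$ for all $t\ge 0$ and all starting points. Now put $c:=\sup_{x\ne y}|u(x)-u(y)|/g(d(x,y))$; this is finite (near the diagonal $|u(x)-u(y)|\sim|\nabla u(x)|\,d(x,y)$ and $g(r)\sim g'(0)r$, while away from it $g$ is bounded below by compactness, with the boundary value at $r=D$ read off from the one-sided limit of $g$ if necessary) and strictly positive since $u$ is non-constant. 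For any $x\ne y$, running the coupling from $(x,y)$ and using $P_tu=e^{-\lambda_1 t}u$,
\[
|u(x)-u(y)|\ =\ e^{\lambda_1 t}\bigl|\E{u(X_t)-u(Y_t)}\bigr|\ \le\ e^{\lambda_1 t}\,\E{|u(X_t)-u(Y_t)|}\ \le\ e^{\lambda_1 t}\,c\,\E{g(\rho_t)}\ \le\ c\,e^{(\lambda_1-\xi)t}\,g(d(x,y)).
\]
Dividing by $g(d(x,y))>0$ and taking the supremum over $x\ne y$ gives $c\le c\,e^{(\lambda_1-\xi)t}$ for every $t>0$, hence $e^{(\lambda_1-\xi)t}\ge 1$ for all $t>0$, i.e. $\lambda_1\ge\xi$, which is the claimed bound.
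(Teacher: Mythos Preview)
Your argument is correct and is essentially the same Kendall--Cranston reflection-coupling proof the paper uses (the paper does not reprove Theorem~\ref{theo_intro} itself, but its proof of the K\"ahler analogue Theorem~\ref{main kahler} is exactly this argument with the Riemannian index estimate replaced by Theorem~\ref{comparison index  kahler}). The only cosmetic difference is in the endgame: the paper bounds $|\Phi(X_t)-\Phi(Y_t)|\le \|\nabla\Phi\|_\infty\,\rho_t$ and then uses $r\le C g(r)$, whereas you introduce $c=\sup_{x\neq y}|u(x)-u(y)|/g(d(x,y))$ and compare directly; both routes lead to the same exponential comparison and the conclusion $\lambda_1\ge\xi$.
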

For instance, if $k=0$,  the choice $g(r)=\sin \left(\frac{\pi}{2D} r \right )$ in Theorem \ref{theo_intro}  yields the celebrated Zhong-Yang \cite{MR794292} estimate $\lambda_1 \ge \frac{\pi^2}{D^2}$.

In this note we apply in the context of K\"ahler and quaternion K\"ahler manifolds a similar coupling method to improve this first eigenvalue estimate on K\"ahler and quaternion K\"ahler manifolds. 	We rely on the decomposition of Ricci curvature on K\"ahler manifolds into orthogonal Ricci curvature and holomorphic sectional curvature as follows.
\begin{equation*}
	Ric(X,\overline{X})=Ric^{\perp}(X,\overline{X})+R(X,\overline{X},X,\overline{X})/|X|^2,
\end{equation*}
where  $X$ is  a $(1,0)$-tangent vector of the holomorphic tangent bundle on a K\"ahler manifold $\M^n$. A similar decomposition holds on a quaternion K\"ahler manifold. It seems worthy to mention that to obtain Laplacian and Index comparison theorems with model spaces in K\"ahler (and quaternion K\"ahler) geometry, it is necessary that the lower bounds of orthogonal Ricci curvature and holomorphic (quaternionic) sectional curvature should be assumed simultaneously, not only a Ricci curvature lower bound (also, see \cite{GM21}). In this paper, the main geometric ingredients are new estimates of the index form in those settings that build on the previous recent work \cite{FG20}. 

\

\textbf{Acknowledgement:} Shortly after this work was posted on arXiv, the authors were made aware of the references \cite{XLKW20,XLKW21} which independently obtained comparable results under similar assumptions but  by using analytic methods. The authors thank the anonymous referees for a careful reading and remarks that improved the presentation of the paper.

\section{Preliminaries: K\"ahler and quaternion K\"ahler manifolds}

In this section, for the sake of completeness, we give the definitions we will be using in this paper. We refer to \cite{FG20}  for more details.
Throughout the paper, let $(\M,g)$ be a smooth complete Riemannian manifold. Denote by $\nabla$ the Levi-Civita connection on $\M$.

\subsection{K\"ahler manifolds}

\begin{definition}
	The manifold $(\M,g)$ is called a K\"ahler manifold, if there exists a smooth  $(1,1)$ tensor $J$ on $\M$ that satisfies:
	\begin{itemize}
		\item For every $x \in \M$, and $X,Y \in T_x\M$, $g_x(J_x X,Y)=-g_x(X,J_xY)$;
		\item For every $x \in \M$, $J_x^2=-\mathbf{Id}_{T_x\M} $;
		\item $\nabla J$=0.
	\end{itemize}
	The map $J$ is called a complex structure.
\end{definition}

It is well-known that K\"ahler manifolds can be seen as the complex manifolds for which the Chern connection coincide with the Levi-Civita connection, see \cite{MR2325093}, however the complex viewpoint will not be necessary to state and prove our results, so we will always only consider the real structure on a K\"ahler manifold and work with the definition above.

We will be considering the holomorphic sectional curvature and orthogonal Ricci curvature which are defined as below. A reason to consider those curvature quantities instead of the more usual Ricci curvature is that in the classical Riemannian  comparison theorems involving a Ricci curvature lower bound, the spaces with respect to which the comparison is made are usually spheres, Euclidean spaces and hyperbolic spaces. To develop comparison theorems in the category  of K\"ahler manifolds, it is then necessary to consider more subtle curvature invariants which are adapted to the additional K\"ahler structure. We refer to  \cite{MR3858834} and \cite{MR3996490} for further details about the geometric interpretation of the holomorphic sectional curvature and orthogonal Ricci curvature.

Let
\[
R(X,Y,Z,W)=g ( (\nabla_X \nabla_Y -\nabla_Y \nabla_X -\nabla_{[X,Y]} )Z, W)
\]
be the Riemannian curvature tensor of $(\M,g)$. 
The holomorphic sectional curvature of the K\"ahler manifold $(\M,g,J)$ is defined as
\[
H(X)=\frac{R(X,JX,JX,X)}{g(X,X)^2}.
\]

The orthogonal Ricci curvature  of the K\"ahler manifold $(\M,g,J)$ is defined for a vector field  $X$ such that $g(X,X)=1$ by
\[
\mathrm{Ric}^\perp (X,X)=\mathrm{Ric} (X,X)-H(X),
\]
where $\mathrm{Ric}$ is the usual Riemannian Ricci tensor of $(\M,g)$. Unlike the Ricci tensor, $\operatorname{Ric}^{\perp}$ does not admit  polarization, so we never consider $\operatorname{Ric}^\perp (u, v)$ for $u\not= v$.

 The table below shows the curvature of the K\"ahler model spaces $\mathbb{C}^m$ , $\mathbb{C}P^m$ and $\mathbb{C}H^m$, see \cite{FG20}.

\begin{table}[H]
\centering
\scalebox{0.8}{
\begin{tabular}{|p{1.5cm}||p{1.5cm}|c|>{\centering\arraybackslash}p{1.8cm}|c|  }
  \hline
 $\M$ &  $H$ & $\mathrm{Ric}^\perp$  \\
  \hline
 \hline
$\mathbb{C}^m$ &  $0 $ & $0$ \\
 $\mathbb{C}P^m$ & 4 & $2m-2$  \\
$\mathbb{C}H^m$  &  -4 & $-(2m-2)$  \\
  \hline
\end{tabular}}
\caption{Curvatures of K\"ahler model spaces.}
\label{Table 1}
\end{table}

\subsection{Quaternion K\"ahler manifolds}

\begin{definition}
The manifold $(\M,g)$ is called a  quaternion K\"ahler manifold, if   there exists a covering of $\M$ by open sets $U_i$ and, for each $i$,  3 smooth  $(1,1)$ tensors $I,J,K$ on $U_i$ such that:

\begin{itemize}
\item For every $x \in U_i$, and $X,Y \in T_x\M$, $g_x(I_x X,Y)=-g_x(X,I_xY)$,  $g_x(J_x X,Y)=-g_x(X,J_xY)$, $g_x(K_x X,Y)=-g_x(X,K_xY)$ ;
\item For every $x \in U_i$, $I_x^2=J_x^2=K_x^2=I_xJ_xK_x=-\mathbf{Id}_{T_x\M} $;
\item For every $x \in U_i$, and $X\in T_x\M$,  $\nabla_X I, \nabla_X J, \nabla_X K \in \mathbf{span} \{ I,J,K\}$;
\item For every $x \in U_i \cap U_j$, the vector space of endomorphisms of $T_x\M$ generated by $I_x,J_x,K_x$ is the same for $i$ and $j$.
\end{itemize}
\end{definition}
On quaternion K\"ahler manifolds, we will be considering the following curvatures. Let
\[
R(X,Y,Z,W)=g ( (\nabla_X \nabla_Y -\nabla_Y \nabla_X -\nabla_{[X,Y]} )Z, W)
\]
be the Riemannian curvature tensor of $(\M,g)$. We define the quaternionic sectional  curvature of the quaternionic K\"ahler manifold $(\M,g,J)$ as
\[
Q(X)=\frac{R(X,IX,IX,X)+R(X,JX,JX,X)+R(X,KX,KX,X)}{g(X,X)^2}.
\]

We define the orthogonal Ricci curvature  of the quaternionic K\"ahler manifold $(\M,g,I,J,K)$  for a vector field  $X$ such that $g(X,X)=1$ by
\[
\mathrm{Ric}^\perp (X,X)=\mathrm{Ric} (X,X)-Q(X),
\]
where $\mathrm{Ric}$ is the usual Riemannian Ricci tensor of $(\M,g)$. The table below shows the curvature of the quaternion-K\"ahler model spaces $\mathbb{H}^m$ , $\mathbb{H}P^m$ and $\mathbb{H}H^m$, see \cite{FG20}.

 \begin{table}[H]
\centering
\scalebox{0.8}{
\begin{tabular}{|p{1.5cm}||p{1.5cm}|c|>{\centering\arraybackslash}p{1.8cm}|c|  }
  \hline
 $\M$ &   $Q$ & $\mathrm{Ric}^\perp$  \\
  \hline
 \hline
$\mathbb{H}^m$ &  $0 $ & $0$ \\
 $\mathbb{H}P^m$ & 12 & $4m-4$  \\
$\mathbb{H}H^m$  &  -12 & $-(4m-4)$  \\
  \hline
\end{tabular}}
\caption{Curvatures of the quaternion K\"ahler model spaces.}
\label{Table 2}
\end{table}

\section{Index form estimates on K\"ahler and quaternion K\"ahler manifolds}

Let $(\M,g)$ be a complete Riemannian manifold with real dimension $n$ and denote by $d$ the Riemannian distance on $\M$.  The index form of a vector field $X$ (with not necessarily vanishing endpoints) along a geodesic $\gamma$ is defined by
\begin{align*}
I (\gamma,X,X): = \int_0^T \left( \langle \nabla_{\gamma'}  X, \nabla_{\gamma'} X  \rangle-\langle R(\gamma',X)X,\gamma'\rangle \right) dt,
\end{align*}
where $\nabla$ is the Levi-Civita connection and $R$ is the Riemann curvature tensor of $\M$.

We will denote by 
\[
\Cut (\M)=\left\{ (x,y) \in \M \times \M, \, x \in \Cut (y) \right\}.
\]
where $\Cut (y)$ denotes the cut-locus of $y$. For $(x,y) \notin \Cut (\M)$ we denote
\[
\mathcal{I}(x,y)=\sum_{i=1}^{n-1} I( \gamma, Y_i,Y_i)
\]
where $\gamma$ is the unique length parametrized geodesic from $x$ to $y$ and $\{ Y_1, \cdots, Y_{n-1} \}$ are Jacobi fields such that at both $x$ and $y$, $\{ \gamma', Y_1, \cdots, Y_{n-1} \}$ is an orthonormal frame.

Throughout the paper, we consider the comparison function:

\begin{equation}\label{comparison}
G(k,r) = \begin{cases}  -2 \sqrt{ k} \tan \frac{\sqrt{k}r}{2} & \text{if $k > 0$,} \\
0 & \text{if $k = 0$,}\\ 2 \sqrt{|k|} \tanh \frac{\sqrt{|k|}r}{2} & \text{if $k < 0$.} \end{cases}
\end{equation}

\subsection{K\"ahler case}

Let $(\M,g,J)$ be a complete K\"ahler with complex dimension $m$ (i.e. the real dimension is $2m$). As above, the holomorphic sectional curvature of $\M$ will be denoted by $H$ and the orthogonal Ricci curvature by $\mathrm{Ric}^\perp$.


\begin{theorem}\label{comparison index  kahler}
Let $k_1,k_2 \in \mathbb{R}$. Assume that $H \ge 4k_1$ and that $\mathrm{Ric}^\perp \ge (2m-2)k_2$. For every $(x,y) \notin \Cut (\M)$, one has
\[
\mathcal{I}(x,y) \le (2m-2)G(k_2,d(x,y)) +2 G(k_1,2d(x,y)).
\]
\end{theorem}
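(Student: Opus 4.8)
The strategy is the classical one for index comparison theorems: the index form $\mathcal{I}(x,y)$ is the sum of the index forms of the Jacobi fields along the minimizing geodesic $\gamma$, and by the minimizing property of Jacobi fields (the index lemma), we may replace those Jacobi fields by any convenient vector fields $W_i$ along $\gamma$ vanishing at the endpoints when $x,y$ are conjugate-free, or more carefully, by fields sharing the correct boundary frame. So the main task is to choose a good orthonormal frame $\{\gamma', e_1, \dots, e_{2m-1}\}$ along $\gamma$ adapted to the K\"ahler structure, and good test vector fields $W_i = f_i(t) e_i$, and then to bound $\sum_i I(\gamma, W_i, W_i)$ from above using the curvature hypotheses $H \ge 4k_1$ and $\operatorname{Ric}^\perp \ge (2m-2)k_2$.

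Here is where the K\"ahler structure enters. Along $\gamma$, distinguish the direction $J\gamma'$: since $\nabla J = 0$ and $|\gamma'|=1$, the field $J\gamma'$ is parallel and orthogonal to $\gamma'$. Split the frame accordingly into $e_1 := J\gamma'$ and the remaining $2m-2$ parallel unit vectors $e_2, \dots, e_{2m-1}$ spanning the orthogonal complement of $\operatorname{span}\{\gamma', J\gamma'\}$. For the $J\gamma'$-direction one has $\langle R(\gamma', J\gamma')J\gamma', \gamma'\rangle = H(\gamma') \ge 4k_1$, so this single index form is controlled by the one-dimensional Jacobi comparison with curvature $4k_1$; the factor $2$ and the doubled argument $2d(x,y)$ in the statement are exactly what comes from the $4k_1 = 4\cdot k_1$ normalization (equivalently, rescaling: a Jacobi equation $f'' + 4k_1 f = 0$ on $[0,d]$ transforms to one with curvature $k_1$ on $[0,2d]$ after $t \mapsto 2t$, contributing $2G(k_1, 2d(x,y))$). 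For the remaining $2m-2$ directions, summing the curvature terms gives $\sum_{i=2}^{2m-1}\langle R(\gamma', e_i)e_i, \gamma'\rangle = \operatorname{Ric}(\gamma',\gamma') - H(\gamma') = \operatorname{Ric}^\perp(\gamma',\gamma') \ge (2m-2)k_2$, so a common comparison function $f$ solving $f'' + k_2 f = 0$ with $f(0)=f(d)=0$, used in all $2m-2$ of these directions, yields the bound $(2m-2)G(k_2, d(x,y))$ after the standard integration by parts. Adding the two contributions gives the claimed inequality. This is essentially the K\"ahler refinement of the Bishop-type index estimate, following the scheme of \cite{FG20}.

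The technical care — and the main obstacle — lies in two points. First, the test functions $f_i$ must be chosen so that the resulting $W_i$ realize the correct boundary conditions: $\{\gamma', W_1, \dots, W_{2m-1}\}$ must be an orthonormal frame at both $x$ and $y$, matching the frame in which the Jacobi fields $Y_i$ are defined, so that the index lemma applies to compare $I(\gamma, Y_i, Y_i) \le I(\gamma, W_i, W_i)$; one takes $f_i$ with $f_i(0) = f_i(d) = 1$ (not $0$), which means the comparison ODEs are for fields with unit boundary values rather than vanishing ones, and the relevant solution of $f'' + kf = 0$ is $f(t) = \frac{\text{(appropriate combination)}}{\text{...}}$ whose logarithmic derivative at the endpoints produces precisely $\frac{1}{2}G(k, d)$ per endpoint, i.e. $G(k,d)$ total after accounting for both ends — this is the computation that makes the comparison function \eqref{comparison} appear. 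Second, one must handle the sign of $k_1, k_2$ uniformly (the three cases of \eqref{comparison}) and confirm the comparison ODE solution stays positive on $(0,d)$, i.e. that $d(x,y)$ is below the first conjugate distance of the model; since $(x,y) \notin \Cut(\M)$ this is automatic for the $k_2$-part, and for the $k_1$-part one needs $2d(x,y)$ below the conjugate distance of the curvature-$k_1$ model, which follows because a holomorphic-sectional-curvature lower bound $H \ge 4k_1 > 0$ forces (via the index form in the $J\gamma'$ direction) that $\gamma$ has no conjugate point before the corresponding model length — so if $\gamma$ is minimizing, $2d(x,y)$ does not exceed that threshold. Assembling these pieces carefully, with the frame split $e_1 = J\gamma'$ versus the rest, is the whole proof.
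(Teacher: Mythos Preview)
Your plan is correct and matches the paper's proof essentially line for line: the paper also parallel-transports an orthonormal frame with $X_2=J\gamma'$ distinguished, uses test fields $\mathfrak j(4k_1,t)X_2$ and $\mathfrak j(k_2,t)X_i$ for $i\ge 3$ where $\mathfrak j(k,\cdot)$ solves $f''+kf=0$ with $f(0)=f(r)=1$, invokes the index lemma, and computes $\int_0^r(\mathfrak j'^2-k\,\mathfrak j^2)\,dt=G(k,r)$ together with the identity $G(4k_1,r)=2G(k_1,2r)$. Your self-correction from ``$f(0)=f(d)=0$'' to ``$f_i(0)=f_i(d)=1$'' in the second paragraph is the right one, so just be sure the final write-up uses the latter throughout.
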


\begin{remark}
In particular, if $k_1>0$ then $\M$ is compact with a diameter $\le \frac{\pi}{2\sqrt{k_1}}$ and if $k_2 >0$ then $\M$ is compact with a diameter $\le \frac{\pi}{\sqrt{k_2}}$.
\end{remark}

\begin{proof}
Let $(x,y) \notin \Cut (\M)$ and denote by $\gamma:[0,r] \to \M$ where $r=d(x,y)$ the unique length parametrized  geodesic from $x=\gamma(0)$ to $y=\gamma(r)$. At $x$, we consider an orthonormal frame $\{ X_1(x),\cdots, X_{2m}(x) \}$ such that
\[
X_1(x) =\gamma'(0), \, X_2(x)=J \gamma'(0).
\]
We denote by $ X_1,\cdots, X_{2m} $ the vector fields obtained by parallel transport of $ X_1(x),\cdots, X_{2m}(x) $ along $\gamma$. Note that $X_1 =\gamma'$ and that $X_2 =J\gamma'$ because $\nabla J=0$.

We introduce the function
\[
\mathfrak{j}(k,t) = \mathfrak{c}(k,t) +\frac{1- \mathfrak{c}(k,r)}{ \mathfrak{s}(k,r)}   \mathfrak{s}(k,t) 
 \]
where
\begin{equation*}
\mathfrak{s}(k,t) = \begin{cases}   \sin  \sqrt{k} t & \text{if $k > 0$,} \\
t & \text{if $k = 0$,}\\  \sinh \sqrt{|k|} t & \text{if $k < 0$,} \end{cases}
\end{equation*}

and

\begin{equation*}
\mathfrak{c}(k,t) = \begin{cases}   \cos  \sqrt{k} t & \text{if $k > 0$,} \\
1 & \text{if $k = 0$,}\\  \cosh \sqrt{|k|} t & \text{if $k < 0$.} \end{cases}
\end{equation*}

We now consider the vector fields defined along $\gamma$ by $\tilde{X}_2(\gamma(t))=\mathfrak{j}(4k_1,t) X_2$ and for $i=3,\cdots,2m$ by $\tilde{X}_i(\gamma(t))=\mathfrak{j}(k_2,t) X_i.$ From the index lemma (Lemma 1.21 in \cite{MR2394158}) one has $\mathcal{I}(x,y)  \le \sum_{i=2}^{2m} I( \gamma, \tilde{X}_i ,\tilde{X}_i).$
We first estimate
\begin{align*}
I( \gamma, \tilde{X}_2 ,\tilde{X}_2)&=\int_0^r \left( \langle \nabla_{\gamma'}  \tilde{X}_2, \nabla_{\gamma'} \tilde{X}_2  \rangle-\langle R(\gamma',\tilde{X}_2)\tilde{X}_2,\gamma'\rangle \right) dt \\
 &\le \int_0^r (\mathfrak{j}'(4k,t)^2+4k \mathfrak{j}(4k,t)^2)dt=2 G(k_1,2r).
\end{align*}
Then, by a similar computation one has $\sum_{i=3}^{2m} I( \gamma, \tilde{X}_i ,\tilde{X}_i) \le (2m-2)G(k_2,r)$.
\end{proof}

\subsection{Quaternion K\"ahler case}

Let now $(\M,g,I,J,K)$ be a complete quaternion K\"ahler manifold with quaternionic dimension $m$ (i.e. the real dimension is $4m$). As above, the quaternionic sectional curvature of $\M$ will be denoted by $H$ and the orthogonal Ricci curvature by $\mathrm{Ric}^\perp$.


\begin{theorem}\label{comparison index  quaternion kahler}
Let $k_1,k_2 \in \mathbb{R}$. Assume that $Q \ge 12k_1$ and that $\mathrm{Ric}^\perp \ge (4m-4)k_2$.  For every $(x,y) \notin \Cut (\M)$,
\[
\mathcal{I}(x,y) \le (4m-4)G(k_2,d(x,y)) +6 G(k_1,2d(x,y)).
\]
\end{theorem}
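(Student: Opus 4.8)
The plan is to run the argument of Theorem \ref{comparison index  kahler} essentially verbatim, the only change being that the single comparison direction $J\gamma'$ is replaced by the three mutually orthogonal directions $I\gamma'$, $J\gamma'$, $K\gamma'$. Fix $(x,y)\notin\Cut(\M)$ and let $\gamma\colon[0,r]\to\M$, $r=d(x,y)$, be the unique length-parametrized geodesic from $x=\gamma(0)$ to $y=\gamma(r)$. The one genuinely new geometric input I would establish first is: although $I,J,K$ need not be individually parallel, the rank-three subbundle $\mathcal{Q}$ of the normal bundle of $\gamma$ with fibre $\mathbf{span}\{I\gamma',J\gamma',K\gamma'\}$ is well defined (independent of the local choice of compatible triple) and parallel along $\gamma$. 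Parallelism is immediate: since $\gamma$ is a geodesic, $\nabla_{\gamma'}(I\gamma')=(\nabla_{\gamma'}I)\gamma'$, and $\nabla_{\gamma'}I\in\mathbf{span}\{I,J,K\}$ by the defining axiom, so $(\nabla_{\gamma'}I)\gamma'\in\mathcal{Q}$, and likewise for $J$ and $K$; hence $\mathcal{Q}$ is invariant under parallel transport along $\gamma$. Moreover $\{I\gamma',J\gamma',K\gamma'\}$ is an orthonormal frame of $\mathcal{Q}$ orthogonal to $\gamma'$, using the skew-symmetry of $I,J,K$ and the relation $IJ=K$ (etc.): for instance $\langle I\gamma',J\gamma'\rangle=-\langle\gamma',IJ\gamma'\rangle=-\langle\gamma',K\gamma'\rangle=0$.

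Next I would fix at $x$ an orthonormal basis $X_1(x)=\gamma'(0)$, $X_2(x),X_3(x),X_4(x)\in\mathcal{Q}_x$, and $X_5(x),\dots,X_{4m}(x)$ completing it, and parallel transport along $\gamma$ to get parallel vector fields $X_1=\gamma',X_2,\dots,X_{4m}$; by the first step $X_2,X_3,X_4$ remain an orthonormal frame of $\mathcal{Q}$. As in the K\"ahler case I would then take $\tilde X_i=\mathfrak{j}(4k_1,t)X_i$ for $i=2,3,4$ and $\tilde X_i=\mathfrak{j}(k_2,t)X_i$ for $i=5,\dots,4m$; since $\mathfrak{j}(k,0)=\mathfrak{j}(k,r)=1$ these match the orthonormal frame at both endpoints, so the index lemma (Lemma 1.21 in \cite{MR2394158}) gives $\mathcal{I}(x,y)\le\sum_{i=2}^{4m}I(\gamma,\tilde X_i,\tilde X_i)$. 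For the first three terms I would use that $R(\gamma',\cdot,\cdot,\gamma')$ restricted to $\mathcal{Q}$ is a symmetric bilinear form, so its trace is frame independent and equals $R(\gamma',I\gamma',I\gamma',\gamma')+R(\gamma',J\gamma',J\gamma',\gamma')+R(\gamma',K\gamma',K\gamma',\gamma')=Q(\gamma')\ge 12k_1$; plugging this into $\sum_{i=2}^{4}I(\gamma,\tilde X_i,\tilde X_i)=\int_0^r\bigl(3\,\mathfrak{j}'(4k_1,t)^2-\mathfrak{j}(4k_1,t)^2\sum_{i=2}^{4}R(\gamma',X_i,X_i,\gamma')\bigr)dt$ and integrating by parts exactly as before (using $\partial_t^2\mathfrak{j}(k,t)+k\,\mathfrak{j}(k,t)=0$ and the endpoint values of $\mathfrak{j},\mathfrak{j}'$) gives $\sum_{i=2}^{4}I(\gamma,\tilde X_i,\tilde X_i)\le 3\,G(4k_1,r)=6\,G(k_1,2r)$. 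For the remaining terms I would use $\sum_{i=5}^{4m}R(\gamma',X_i,X_i,\gamma')=\mathrm{Ric}(\gamma',\gamma')-Q(\gamma')=\mathrm{Ric}^\perp(\gamma',\gamma')\ge(4m-4)k_2$, and the same computation yields $\sum_{i=5}^{4m}I(\gamma,\tilde X_i,\tilde X_i)\le(4m-4)G(k_2,r)$. Adding the two estimates gives the claim.

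The main obstacle, and really the only non-cosmetic difference from the K\"ahler proof, is the first step: one must check that $\mathcal{Q}$ is genuinely three-dimensional along all of $\gamma$, that parallel transport preserves it, and — crucially — that $\sum_{i=2}^{4}R(\gamma',X_i,X_i,\gamma')=Q(\gamma')$ for the \emph{parallel} orthonormal frame and not only for the (generally non-parallel) frame $\{I\gamma',J\gamma',K\gamma'\}$. Everything downstream — the choice of $\mathfrak{j}$, the index lemma, the integration by parts, and the elementary identities $3\,G(4k_1,r)=6\,G(k_1,2r)$ and $\int_0^r(\mathfrak{j}'(k,t)^2-k\,\mathfrak{j}(k,t)^2)\,dt=G(k,r)$ — is identical to the K\"ahler argument, simply carried over three comparison directions rather than one.
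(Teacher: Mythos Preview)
Your proposal is correct and follows essentially the same approach as the paper: the paper likewise chooses $X_2(x)=I\gamma'(0)$, $X_3(x)=J\gamma'(0)$, $X_4(x)=K\gamma'(0)$, parallel transports, uses $\nabla_{\gamma'}I,\nabla_{\gamma'}J,\nabla_{\gamma'}K\in\mathbf{span}\{I,J,K\}$ to conclude that $\mathbf{span}\{X_2,X_3,X_4\}=\mathbf{span}\{I\gamma',J\gamma',K\gamma'\}$ along $\gamma$, and then invokes frame independence of the trace to get $\sum_{i=2}^4 R(\gamma',X_i,X_i,\gamma')=Q(\gamma')$ before applying the index lemma and the same integration as in the K\"ahler case. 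If anything you are slightly more explicit than the paper about the parallelism of $\mathcal{Q}$ and the trace argument, which is exactly the point you correctly flag as the only genuine addition over the K\"ahler proof.
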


\begin{remark}
In particular, if $k_1>0$ then $\M$ is compact with a diameter $\le \frac{\pi}{2\sqrt{k_1}}$ and if $k_2 >0$ then $\M$ is compact with a diameter $\le \frac{\pi}{\sqrt{k_2}}$.
\end{remark}
\begin{proof}
The proof is almost similar to the K\"ahler case. We can assume $m \ge 2$ since for $m=1$ theorem \ref{comparison index  quaternion kahler} reduces to theorem \ref{theo_intro}. Let $(x,y) \notin \Cut (\M)$ and denote by $\gamma:[0,r] \to \M$ where $r=d(x,y)$ the unique length parametrized  geodesic from $x=\gamma(0)$ to $y=\gamma(r)$. At $x$, we consider an orthonormal frame $\{ X_1(x),\cdots, X_{4m}(x) \}$ such that
\[
X_1(x) =\gamma'(0), \, X_2(x)=I \gamma'(0), \, X_3(x)=J \gamma'(0), \, X_4(x)=K \gamma'(0)
\]
We denote by $ X_1,\cdots, X_{4m} $ the vector fields obtained by parallel transport of $ X_1(x),\cdots, X_{4m}(x) $ along $\gamma$ and consider  the vector fields defined along $\gamma$ by
\[
\tilde{X}_2(\gamma(t))=\mathfrak{j}(4k_1,t) X_2, \, \tilde{X}_3(\gamma(t))=\mathfrak{j}(4k_1,t) X_3, \tilde{X}_4(\gamma(t))=\mathfrak{j}(4k_1,t) X_4
\]
and for $i=5,\cdots,4m$ by
\[
\tilde{X}_i(\gamma(t))=\mathfrak{j}(k_2,t) X_i.
\]
 Since along $\gamma$ one has
\[
\nabla_{\gamma'} I, \nabla_{\gamma'} J, \nabla_{\gamma'} K \in \mathbf{span} \{ I,J,K\} 
\]
we deduce that along $\gamma$ one has
\[
\mathbf{span} \{ X_2, X_3, X_4 \}= \mathbf{span} \{ I \gamma' ,J \gamma' , K \gamma' \}.
\]
Moreover $\{ X_2, X_3, X_4 \}$ and $\{ I \gamma' ,J \gamma' , K \gamma' \}$ are both orthonormal along $\gamma$. One deduces
\begin{align*}
 & R(\gamma',X_2,X_2,\gamma')+R(\gamma',X_3,X_3,\gamma')+R(\gamma',X_4,X_4,\gamma') \\
=& R(\gamma',I\gamma',I\gamma',\gamma')+R(\gamma',J\gamma',J\gamma',\gamma')+R(\gamma',K\gamma',K\gamma',\gamma') \\
=& Q (\gamma').
\end{align*}

Using then the index lemma as in the K\"ahler case and similar arguments as in the proof of theorem \ref{comparison index  kahler}   yields the conclusion.
\end{proof}

\section{First eigenvalue estimates}

With  the index form estimates of the previous section in hands, we can use the reflection coupling method by M.F. Chen and F.Y. Wang \cite{MR1308707,MR1450586}  (see also \cite[Section 6.7]{Hsu}) to get curvature diameter estimates of the first eigenvalue in K\"ahler and quaternion K\"ahler manifolds.

\subsection{K\"ahler case}

Let $(\M,g,J)$ be a compact K\"ahler with complex dimension $m$. We denote by $D$ the diameter of $\M$ and by $\lambda_1$ the first eigenvalue of $\M$.

\begin{theorem}\label{main kahler}
Let $k_1,k_2 \in \mathbb{R}$. Assume that $H \ge 4k_1$ and that $\mathrm{Ric}^\perp \ge (2m-2)k_2$. For any $C^2$ function $g:[0,D] \to \mathbb R$ such that $g(0)=0$ and $g'>0$ on $[0,D)$ one has
\[
\lambda_1 \ge - \sup_{r \in (0,D)} \frac{\mathcal{L}_{k_1,k_2} g(r)}{g(r)},
\]
where
\[
\mathcal{L}_{k_1,k_2}=4 \frac{\partial^2}{\partial r^2}+ \left( (2m-2)G(k_2,r) +2 G(k_1,2r)\right)\frac{\partial}{\partial r}.
\]
\end{theorem}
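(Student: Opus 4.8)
The plan is to run the reflection (Kendall--Cranston) coupling exactly as in Chen--Wang \cite{MR1308707,MR1450586} and \cite[Section 6.7]{Hsu}, the only new ingredient being the K\"ahler index-form bound of Theorem \ref{comparison index  kahler}. First I would fix a first eigenfunction $u$, $\Delta u = -\lambda_1 u$, together with points $x^\ast,y^\ast$ such that $u(x^\ast)=\max_\M u > \min_\M u = u(y^\ast)$ (strict since $u$ is non-constant). Let $(X_t,Y_t)$ be the mirror coupling of two Brownian motions with generator $\Delta$ started from $(x^\ast,y^\ast)$, and set $\rho_t=d(X_t,Y_t)\in[0,D]$. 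As is well known, away from $\Cut(\M)$ this radial process satisfies
\[
d\rho_t \le 2\sqrt{2}\, dB_t + \mathcal I(X_t,Y_t)\, dt
\]
for a one-dimensional Brownian motion $B_t$, while the excursions through $\Cut(\M)$ only contribute a nonpositive (local-time) term, so the inequality holds for all $t\ge0$. Feeding in $\mathcal I(X_t,Y_t)\le (2m-2)G(k_2,\rho_t)+2G(k_1,2\rho_t)$ from Theorem \ref{comparison index  kahler} and applying It\^o's formula to $g(\rho_t)$ — which is legitimate because $g'>0$ lets us keep the inequality after multiplying the drift bound by $g'(\rho_t)$, and because $\frac{1}{2}(2\sqrt{2})^2=4$ produces the leading coefficient of $\mathcal L_{k_1,k_2}$ — gives
\[
d\,g(\rho_t) \le 2\sqrt{2}\,g'(\rho_t)\,dB_t + \bigl(\mathcal L_{k_1,k_2}g\bigr)(\rho_t)\,dt .
\]

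Next I would set $\alpha := -\sup_{r\in(0,D)}\mathcal L_{k_1,k_2}g(r)/g(r)$; since $g(0)=0$ and $g'>0$ force $g>0$ on $(0,D]$, this says precisely that $\mathcal L_{k_1,k_2}g+\alpha g\le0$ on $(0,D)$, so that $e^{\alpha t}g(\rho_t)$ is a continuous local supermartingale. Compactness of $\M$ makes $g$ and $g'$ bounded, hence the stochastic integral is a true martingale and $\mathbb E\bigl[e^{\alpha t}g(\rho_t)\bigr]\le g(d(x^\ast,y^\ast))$ for every $t\ge0$. On the other hand, the coupling property gives $\mathbb E[u(X_t)-u(Y_t)] = e^{-\lambda_1 t}\bigl(u(x^\ast)-u(y^\ast)\bigr)$, and since $u$ is smooth on the compact $\M$ (hence Lipschitz) while $g(r)/r$ is continuous and positive on $(0,D]$ with positive value $g'(0)$ at the origin, there is a constant $c>0$ with $u(x)-u(y)\le c\,g(d(x,y))$ for all $x,y$. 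Combining these,
\[
e^{-\lambda_1 t}\bigl(u(x^\ast)-u(y^\ast)\bigr) \le c\,\mathbb E\bigl[g(\rho_t)\bigr] \le c\, e^{-\alpha t}\, g(d(x^\ast,y^\ast)),
\]
so $e^{(\alpha-\lambda_1)t}$ stays bounded as $t\to\infty$, which forces $\alpha\le\lambda_1$, i.e. the claimed estimate.

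The hard part here is not conceptual but the technical bookkeeping already isolated in the literature: justifying that the inequality for $d\rho_t$ genuinely survives the cut locus (the Kendall--Cranston local-time argument) and that $e^{\alpha t}g(\rho_t)$ is a bona fide supermartingale. I would simply cite \cite[Section 6.7]{Hsu} and \cite{MR1308707,MR1450586} for this, since none of those steps uses the K\"ahler structure — that enters only through Theorem \ref{comparison index  kahler}; everything else (the choice of $\alpha$, the supermartingale property, the eigenfunction identity, and the limiting argument) is soft. Finally, the quaternion K\"ahler analogue would be obtained by the identical argument with Theorem \ref{comparison index  quaternion kahler} in place of Theorem \ref{comparison index  kahler}, replacing $(2m-2)G(k_2,r)+2G(k_1,2r)$ by $(4m-4)G(k_2,r)+6G(k_1,2r)$ throughout.
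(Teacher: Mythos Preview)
Your proposal is correct and follows essentially the same route as the paper: Kendall--Cranston mirror coupling, the index-form bound of Theorem~\ref{comparison index  kahler} to control the drift of $\rho_t$, It\^o's formula applied to $g(\rho_t)$, and comparison with the semigroup action on an eigenfunction. The only cosmetic differences are that the paper starts the coupling at arbitrary $(x,y)$ and reaches $\mathbb E[g(\rho_t)]\le g(\rho_0)e^{-\delta t}$ via Gronwall rather than phrasing it as a supermartingale, and handles the cut locus by invoking Kuwada's random-walk approximation rather than the local-time heuristic; none of this changes the argument.
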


\begin{proof}
Let $g:[0,D] \to \mathbb R$ be a $C^2$ function such that $g(0)=0$ and $g'>0$ on $[0,D)$. As in \cite{MR1308707,MR1450586} we will use the coupling by reflection introduced by Kendall \cite{kendall}. 

The  argument is easy to explain in the absence of cut-locus. Indeed, for the sole sake of the explanation, assume first that $\mathrm{Cut}(\M)=\emptyset$. From Theorem 2.3.2 in \cite{MR3154951} and Theorem \ref{comparison index  kahler},  if  $(X_t,Y_t)_{t \ge 0}$ is  the Kendall's  mirror coupling started from $(x,y)$ then, by denoting $\rho_t=d(X_t,Y_t)$ we have that:
\[
 d \rho_t  \le 2 d\beta_t+\left( (2m-2)G(k_2,\rho_t) +2 G(k_1,2\rho_t)\right) dt,
\]
where $(\beta_t)_{t \ge 0}$ is a Brownian motion on $\mathbb R$ with $\langle \beta \rangle_t=2t$. 
Using then It\^o's formula and the fact that $g$ is non decreasing, we obtain 
\begin{align*}
 g(\rho_t) &  \le g(\rho_0) +2\int_0^t g'(\rho_s) d\beta_s+4\int_0^t g''(\rho_s) ds+\int_0^t \left( (2m-2)G(k_2,\rho_s) +2 G(k_1,2\rho_s)\right) g'(\rho_s) ds \\\
  &\le  g(\rho_0) +2\int_0^t g'(\rho_s) d\beta_s+\int_0^t \mathcal{L}_{k_1,k_2} g(\rho_s) ds \\
  &\le  g(\rho_0) +2\int_0^t g'(\rho_s) d\beta_s-\delta \int_0^t  g(\rho_s) ds,
\end{align*}
where $\delta=- \sup_{r \in (0,D)} \frac{\mathcal{L}_{k_1,k_2} g(r)}{g(r)}$. By taking expectations, we obtain therefore
\[
\mathbb{E}( g(\rho_t)) \le \mathbb{E}( g(\rho_0))-\delta \int_0^t  \mathbb{E}(g(\rho_s)) ds.
\]
Thanks to Gronwall's inequality this yields
\[
\mathbb{E}( g(\rho_t)) \le \mathbb{E}( g(\rho_0)) e^{-\delta t}.
\]

On the other hand, let us now consider an eigenfunction $\Phi$ associated to the eigenvalue $\lambda_1$, i.e. $\Delta \Phi =-\lambda_1 \Phi$. One has
\begin{align*}
e^{-\lambda_1t } | \Phi(x)-\Phi(y)|& = | \mathbb{E} ( \Phi(X_t)-\Phi(Y_t) )| \\
 & \le \mathbb{E} (| \Phi(X_t)-\Phi(Y_t) |) \\
 & \le \| \nabla \Phi \|_\infty \mathbb{E}( d(X_t,Y_t)) = \| \nabla \Phi \|_\infty \mathbb{E}( \rho_t).
\end{align*}
Thanks to our assumptions on $g$, there exists a constant $C>0$ so that for every $r \in [0,D]$, $g(r) \ge \frac{1}{C} r$. We obtain therefore
\begin{align*}
e^{-\lambda_1t } | \Phi(x)-\Phi(y)| \le C  \| \nabla \Phi \|_\infty  \mathbb{E}( g(\rho_t)) \le C  \| \nabla \Phi \|_\infty g( d(x,y) ) e^{-\delta t}.
\end{align*}
Since it is true for every $t \ge 0$ one concludes that $\lambda_1 \ge \delta$.

In the absence of cut locus issues the Kendall coupling is easily constructed using stochastic differential equations and the above argument is complete. To handle cut-locus issues, it is possible to instead construct the Kendall coupling $(X_t,Y_t)$ as a limit of coupled random walks, see \cite{kuwada}  and \cite{MR2080605}.  In particular, a similar argument as in \cite[Lemma 11]{kuwada} yields as above
\[
\mathbb{E}( g(\rho_t)) \le \mathbb{E}( g(\rho_0))-\delta \int_0^t  \mathbb{E}(g(\rho_s)) ds.
\]
and the conclusion follows then as before.
\end{proof}

\begin{remark}
Theorem \ref{main kahler} can be used to improve the lower bound of Theorem \ref{theo_intro} in some situations like for example  the complex projective space $\mathbb{C}P^m$ in Table \eqref{Table 1}.  Indeed $H \ge 4k_1$ and  $\mathrm{Ric}^\perp \ge (2m-2)k_2$ imply the Ricci lower bound $\mathrm{Ric} \ge 4k_1+(2m-2)k_2$ and we always have
\[
(2m-2)G(k_2,r) +2 G(k_1,2r) \le (2m-1) G \left( \frac{4k_1+(2m-2)k_2}{2m-1} , r \right).
\]
by concavity of $k \to G(k,r)$.
\end{remark}
%

%
%
%
%
\subsection{Quaternion K\"ahler case}

Let now $(\M,g,I,J,K)$ be a complete quaternion K\"ahler with quaternionic dimension $m$. As before, we denote by $D$ the diameter of $\M$ and by $\lambda_1$ the first eigenvalue of $\M$. Using the coupling by reflection as in the previous section, we obtain the following result.

\begin{theorem}\label{main Q-kahler}
Let $k_1,k_2 \in \mathbb{R}$. Assume that $Q \ge 12k_1$ and that $\mathrm{Ric}^\perp \ge (4m-4)k_2$. For any $C^2$ function $g:[0,D) \to \mathbb R$ such that $g(0)=0$ and $g'>0$ on $[0,D)$ one has
\[
\lambda_1 \ge - \sup_{r \in (0,D)} \frac{\tilde{\mathcal{L}}_{k_1,k_2} g(r)}{g(r)},
\]
where
\[
\tilde{\mathcal{L}}_{k_1,k_2}=4 \frac{\partial^2}{\partial r^2}+ \left( (4m-4)G(k_2,r) +6 G(k_1,2r)\right)\frac{\partial}{\partial r}.
\]
\end{theorem}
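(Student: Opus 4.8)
The plan is to mirror the proof of Theorem \ref{main kahler} step by step, replacing the K\"ahler index estimate (Theorem \ref{comparison index  kahler}) by its quaternionic counterpart (Theorem \ref{comparison index  quaternion kahler}). Fix a $C^2$ function $g:[0,D)\to\mathbb R$ with $g(0)=0$ and $g'>0$ on $[0,D)$, and let $(X_t,Y_t)_{t\ge 0}$ be Kendall's mirror coupling of two Brownian motions on $\M$ started from $(x,y)$. Writing $\rho_t=d(X_t,Y_t)$, the first step is to invoke the general stochastic comparison for the distance between mirror-coupled Brownian motions (Theorem 2.3.2 in \cite{MR3154951}), which bounds the drift of $\rho_t$ by the quantity $\mathcal I(X_t,Y_t)$ computed above; combining this with the bound $\mathcal I(x,y)\le (4m-4)G(k_2,d(x,y))+6G(k_1,2d(x,y))$ from Theorem \ref{comparison index  quaternion kahler} gives
\[
d\rho_t \le 2\,d\beta_t + \bigl((4m-4)G(k_2,\rho_t)+6G(k_1,2\rho_t)\bigr)\,dt,
\]
where $\beta_t$ is a real Brownian motion with $\langle\beta\rangle_t=2t$.

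The second step is an It\^o's-formula computation: applying It\^o to $g(\rho_t)$, using $g'\ge 0$ so that the inequality on $d\rho_t$ passes through, and collecting the second-order term $4g''(\rho_s)$ with the drift term, one obtains
\[
g(\rho_t)\le g(\rho_0)+2\int_0^t g'(\rho_s)\,d\beta_s+\int_0^t \tilde{\mathcal L}_{k_1,k_2}g(\rho_s)\,ds
\le g(\rho_0)+2\int_0^t g'(\rho_s)\,d\beta_s-\delta\int_0^t g(\rho_s)\,ds,
\]
where $\delta=-\sup_{r\in(0,D)}\tilde{\mathcal L}_{k_1,k_2}g(r)/g(r)$. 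Taking expectations kills the martingale term, and Gronwall's inequality yields $\mathbb E(g(\rho_t))\le \mathbb E(g(\rho_0))e^{-\delta t}$. The third step reproduces the spectral argument verbatim: if $\Phi$ is an eigenfunction with $\Delta\Phi=-\lambda_1\Phi$, then $e^{-\lambda_1 t}|\Phi(x)-\Phi(y)|=|\mathbb E(\Phi(X_t)-\Phi(Y_t))|\le \|\nabla\Phi\|_\infty\mathbb E(\rho_t)$; since $g(r)\ge r/C$ for some $C>0$ on $[0,D]$ by the hypotheses on $g$, this gives $e^{-\lambda_1 t}|\Phi(x)-\Phi(y)|\le C\|\nabla\Phi\|_\infty g(d(x,y))e^{-\delta t}$ for all $t\ge 0$, forcing $\lambda_1\ge\delta$.

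The one genuine obstacle, exactly as in the K\"ahler case, is the presence of the cut-locus: the stochastic differential equation defining the mirror coupling is only valid away from $\Cut(\M)$, and the index estimate $\mathcal I(x,y)$ is only established for $(x,y)\notin\Cut(\M)$. I would handle this the same way as in the proof of Theorem \ref{main kahler}, namely by constructing the Kendall coupling as a limit of coupled random walks following \cite{kuwada} and \cite{MR2080605}; the argument of \cite[Lemma 11]{kuwada} then still delivers the key differential inequality $\mathbb E(g(\rho_t))\le \mathbb E(g(\rho_0))-\delta\int_0^t\mathbb E(g(\rho_s))\,ds$ at the level of expectations, after which the conclusion follows as above. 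Since the quaternionic index bound from Theorem \ref{comparison index  quaternion kahler} has precisely the same structure as the K\"ahler one, with $(2m-2)$ replaced by $(4m-4)$ and the coefficient $2$ replaced by $6$, no new analytic input beyond Theorem \ref{comparison index  quaternion kahler} is needed and the proof is complete.
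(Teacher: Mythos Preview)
Your proposal is correct and matches the paper's approach exactly: the paper does not give a separate proof of Theorem~\ref{main Q-kahler}, instead stating that the result follows ``using the coupling by reflection as in the previous section,'' i.e., by repeating the proof of Theorem~\ref{main kahler} verbatim with Theorem~\ref{comparison index  quaternion kahler} in place of Theorem~\ref{comparison index  kahler}. Every step you outline (the mirror-coupling drift bound, the It\^o/Gronwall argument, the eigenfunction comparison, and the cut-locus treatment via \cite{kuwada,MR2080605}) is precisely what the paper does in the K\"ahler case and intends here.
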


\begin{remark}
As in the K\"ahler case, Theorem \ref{main Q-kahler}  can be used to improve Theorem \ref{theo_intro} in some situations like the quaternionic projective space $\mathbb{H}P^m$ in Table \eqref{Table 2}. Indeed $Q \ge 12k_1$ and $\mathrm{Ric}^\perp \ge (4m-4)k_2$ imply the Ricci lower bound $\mathrm{Ric} \ge 12k_1+(4m-4)k_2$ and we have
\[
(4m-4)G(k_2,r) +6 G(k_1,2r) \le (2m-1) G \left( \frac{12k_1+(4m-4)k_2}{4m-1} , r \right).
\]
\end{remark}

\bibliographystyle{amsplain}
\bibliography{references}


\end{document}